\newtheorem{main-theorem*}{Theorem}
\newtheorem{lemma*}[main-theorem*]{Lemma}
\renewcommand{\mod}{\operatorname{mod}}
\newcommand{\rad}{\operatorname{rad}}
\newcommand{\op}{\operatorname{op}}
\newcommand{\bA}{\mathbb{A}}
\newcommand{\bB}{\mathbb{B}}
\newcommand{\bC}{\mathbb{C}}
\newcommand{\bD}{\mathbb{D}}
\newcommand{\bE}{\mathbb{E}}
\newcommand{\bF}{\mathbb{F}}
\newcommand{\bG}{\mathbb{G}}
\newcommand{\bL}{\mathbb{L}}
\newcommand{\FF}{\mathbb{F}}
\begin{document}

\baselineskip=17pt

\title{Deformed mesh algebras of Dynkin type $\mathbb{F}_4$}

\author[J. Bia\l kowski]{Jerzy Bia\l kowski}
\address{Faculty of Mathematics and Computer Science,
   Nicolaus Copernicus University,
   Chopina~12/18,
   87-100 Toru\'n,
   Poland}
\email{jb@mat.uni.torun.pl}

\date{}

\subjclass[2010]{Primary 16D50, 16G20; Secondary 16G50}

\keywords{
Deformed mesh algebra,
Canonical mesh algebra, 
Self-injective algebra,
Periodic algebra}

\begin{abstract}
We prove that every deformed mesh algebra of type $\FF_4$
is isomorphic to the canonical mesh algebra of type $\FF_4$.
\end{abstract}

\maketitle

\section*{Introduction}

Throughout this article, $K$ will denote a fixed algebraically 
closed field.
By an algebra we mean an associative finite-dimensional $K$-algebra
with  identity, which we moreover assume to be basic and connected.
For an algebra $A$, we denote by $\mod A$ the category of
finite-dimensional right $A$-modules and by $\Omega_A$ the syzygy
operator which assigns to a module $M$ in $\mod A$ the kernel
of a minimal projective cover $P_A(M) \to M$ of $M$ in $\mod A$.
Then a module $M$ in $\mod A$ is called \emph{periodic} 
if $\Omega_A^n(M) \cong M$ for some $n \geq 1$.
Further, the category of finite-dimensional $A$-$A$-bimodules over
an algebra $A$ is canonically equivalent to the module category
$\mod A^e$ over the enveloping algebra $A^e = A^{\op} \otimes_K A$ of $A$.
Then an algebra $A$ is called a \emph{periodic algebra} 
if $A$ is a periodic module in $\mod A^e$.
It is known that if $A$ is a periodic algebra then is self-injective 
and  
every module $M$ in $\mod A$ without non-zero projective 
direct summands is periodic.
Periodic algebras play currently a prominent r\^ole in the representation
theory of algebras and have attracted much attention (see the survey
article \cite{ESk}).
In particular, it has been proved in \cite{Du} that
all self-injective algebras of finite representation type
(different from $K$) are periodic.
We refer also to recent articles \cite{ESk2,ESk3,ESk4}
on the connections of periodic algebras with finite groups
and triangulated surfaces.

In this note we are concerned with the classification 
of deformed mesh algebras of Dynkin types 
$\bA_n (n \geq 2)$, 
$\bB_n (n \geq 2)$, 
$\bC_n (n \geq 3)$, 
$\bD_n (n \geq 4)$, 
$\bE_6$, $\bE_7$, $\bE_8$, 
$\bF_4$, $\bG_2$, 
and $\bL_n (n \geq 1)$.
It was shown in \cite{BES1,ESk}
that for these algebras
the third syzygy permutes the isomorphism classes of simple modules.
It was also shown in \cite{BES1,ESk} that, for algebraically 
closed fields of positive characteristic, the deformed
mesh algebras of Dynkin type
are periodic algebras.
On the other hand, for algebraically closed fields
of characteristic $0$, it is expected that
every deformed mesh algebra of Dynkin type 
is isomorphic to the mesh algebra of Dynkin type, 
and hence is also a periodic algebra.
The class of deformed mesh algebras of Dynkin type 
contains the deformed preprojective algebras of 
generalized Dynkin types
$\bA_n (n \geq 2)$, 
$\bD_n (n \geq 4)$, 
$\bE_6$, $\bE_7$, $\bE_8$ 
and $\bL_n (n \geq 1)$, 
which occur naturally in very different contexts.
For these, the third syzygy  of any simple module is isomorphic to its
shift by the Nakayama functor (see \cite{BES1}).
We refer to \cite{BES1,ESk},
for results on the importance of these algebras
in the representation theory of self-injective algebras.
Mesh algebras of Dynkin types include in particular 
the  stable Auslander algebras of the Arnold's simple hypersurface
singularities \cite{Arn}. 
In fact, it is an interesting open problem 
whether 
any deformed mesh algebra, of Dynkin type 
over an arbitrary closed field $K$
is a 
stable Auslander algebra of a simple hypersurface singularity.
We recall that 
it was proved in \cite{BES2} that the
deformed preprojective algebras of generalized Dynkin type 
$\bL_n$ (in the sense of \cite{BES1})
are exactly (up to isomorphism) the stable Auslander algebras of
simple plane singularities of Dynkin type $\bA_{2n}$.
Moreover, 
it was shown in \cite{BES3} that the
deformed mesh algebras of Dynkin type $\bC_n$ are isomorphic 
to the canonical mesh algebras of type $\bC_n$, and hence
to the stable Auslander algebras of the unique simple plane curve
singularity of type $\bA_{2n-1}$.
It was also shown in \cite{B2} that
deformed mesh algebras of Dynkin type $\bE_6$ are isomorphic 
to the canonical mesh algebra of type $\bE_6$.
It is known, that the mesh algebra of Dynkin type $\bG_2$
is in fact a tubular algebra 
(isomorphic to the algebra $A_4$ from \cite{BS1}),
and it follows from proof of \cite[Lemma~5.16]{BS1} 
that
the deformed mesh algebras of Dynkin type $\bG_2$ are isomorphic 
to the canonical mesh algebra of type $\bG_2$.
On the other hand, there exist 
deformed mesh algebras of the Dynkin types 
$\bB_n$ (see \cite[Example~9.1]{ESk}), 
$\bD_n$ (see \cite[Proposition~6.1]{BES1}), 
$\bE_7$ and $\bE_8$ (see \cite[Theorem]{B1})
not isomorphic 
to the canonical mesh algebras of these types.
But classifying these algebras seems to be a difficult problem.
For more information 
on classification and periodicity of mesh algebras
we refer to \cite{Du2}.
For more information 
on hypersurface singularities 
we refer to \cite{GK,KS}.
For general background on the
representation theory 
and
selfinjective algebras
we refer to \cite{SY}.

\section*{Results}

The main aim of this article is to prove the following theorem 
providing the classification of deformed mesh algebras of type $\FF_4$.

\begin{main-theorem*}
\label{thm}
    Every deformed mesh algebra of type $\FF_4$
    is isomorphic to the canonical mesh algebra of type $\FF_4$.
\end{main-theorem*}

We recall that the 
\emph{canonical mesh algebra $\Lambda(\bF_4)$ of type $\FF_4$} 
is given by the quiver
$$
    \begin{array}{c} Q_{\mathbb{F}_{4}}: \\ \end{array}
   \quad \vcenter{
    \xymatrix@C=.8pc@R=2pc{
        &&&& 2  \ar@<-.5ex>[lld]_{a_2} \ar[rrdd]^(.75){\bar{a}_4}
        && 4 \ar[ll]_{a_4}
        \\
         1 \ar@<.5ex>[rr]^{a_1} & & 0 \ar@<.5ex>[ll]^{\bar{a}_1}
           \ar@<-.5ex>[rru]_{\bar{a}_3} \ar@<.5ex>[rrd]^{\bar{a}_2}
        \\
        &&&& 3 \ar@<.5ex>[llu]^{a_3} \ar[rruu]_(.75){\bar{a}_5}
        && 5 \ar[ll]^{a_5}
        \\
    }
   }
$$
and the relations
\begin{gather*}
  a_1 \bar{a}_1 = 0, 
  \quad 
 \bar{a}_1 a_1 + \bar{a}_3 a_2 + \bar{a}_2 a_3 = 0,
 \\
  {a}_2 \bar{a}_2 + \bar{a}_4 {a}_5 = 0,  
  \quad 
  {a}_3 \bar{a}_3 + \bar{a}_5 {a}_4 = 0,  
  \quad 
  {a}_4 \bar{a}_4 = 0,  
  \quad 
  {a}_5 \bar{a}_5 = 0.  
\end{gather*}
We note that $\Lambda(\FF_4)$ is not a weakly symmetric algebra, 
and hence not a symmetric algebra.
Further, consider the local commutative algebra
$$
   R(\mathbb{F}_4) = K \langle x, y \rangle /
   \left( xyx, yxy, (x+y)^{2} \right)
   ,
$$
which is isomorphic to the algebra $e_0 \Lambda(\FF_4) e_0$, where $e_0$
is the primitive idempotent in $\Lambda(\FF_4)$ associated to the vertex
$0$ of $Q_{\FF_4}$.
An element $f$ from the square $\rad^2 R(\FF_4)$ of the radical
$\rad \ R(\FF_4)$ of $R(\FF_4)$ 
is said to be \textit{admissible}
if $f$ satisfies the following condition
$$\big(x+y + f(x,y)\big)^{2} = 0.$$
Let $f \in {\rm rad}^2R(\FF_4)$ be admissible.
We denote by $\Lambda^f(\FF_4)$ 
the algebra  given by the quiver $Q_{\FF_4}$ and the relations
\begin{gather*}
  a_1 \bar{a}_1 = 0, 
  \quad 
  \bar{a}_2 a_3 \bar{a}_3 a_2 \bar{a}_2 a_3 = 0, 
  \quad 
  \bar{a}_3 a_2 \bar{a}_2 a_3 \bar{a}_3 a_2 = 0, 
  \\ 
  (\bar{a}_2 a_3 + \bar{a}_3 a_2)^2 = 0,
  \quad 
 \bar{a}_1 a_1 + \bar{a}_3 a_2 + \bar{a}_2 a_3 
  + f(\bar{a}_2 a_3, \bar{a}_3 a_2) = 0,
 \\
  {a}_2 \bar{a}_2 + \bar{a}_4 {a}_5 = 0,  
  \quad 
  {a}_3 \bar{a}_3 + \bar{a}_5 {a}_4 = 0,  
  \quad 
  {a}_4 \bar{a}_4 = 0,  
  \quad 
  {a}_5 \bar{a}_5 = 0.  
\end{gather*}
Then $\Lambda^f(\FF_4)$ is called a \emph{deformed mesh algebra}
of type ${\FF_4}$ (see \cite[Section~9]{ESk}).
Observe that $\Lambda^f(\FF_4)$ is obtained from $\Lambda(\FF_4)$
by deforming the relation at the exceptional vertex $0$ of $Q_{\FF_4}$,
and $\Lambda^f(\FF_4) = \Lambda(\FF_4)$ if $f = 0$.

The following lemma describes the structure of
admissible elements of ${\rm rad}^2R(\FF_4)$.

\begin{lemma*}
\label{lem}
An element $f$ from ${\rm rad}^2R(\FF_4)$ is admissible if and only if
\begin{align*}
 f(x,y)
  &=
  \theta_1
  xy
  +
  \theta_2
  yx
  +
  \theta_3
  xxx
  +
  \theta_4
  xxy
  +
  \theta_5
  yxx
  +
  \theta_6
  xxxx
  +
  \theta_7
  xxxy
 \\&
 \quad
  +
  \theta_8
  xxxxx
\end{align*}
for some $\theta_1, \dots, \theta_8 \in K$,
satisfying
$\theta_2 = - \theta_1$,
$\theta_5 = 2\theta_3-\theta_4+\theta_1^2$,
and
$2\theta_7 = 2(\theta_6-\theta_1^3+2\theta_1(\theta_4-\theta_3))$.
\end{lemma*}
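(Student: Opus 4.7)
The plan is to fix a $K$-basis of $R(\FF_4)$, write the general $f \in \rad^2 R(\FF_4)$ with indeterminate coefficients, expand the equation $(x+y+f)^2 = 0$, and read off the constraints coefficient by coefficient. Since $(x+y)^2 = 0$ already holds in $R(\FF_4)$, admissibility is equivalent to
$$
(x+y)\, f + f\,(x+y) + f^2 = 0,
$$
so the coefficient of every basis monomial of $\rad^3 R(\FF_4)$ in the left-hand side must vanish.

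First I would establish a normal-form basis of $R(\FF_4)$. From $(x+y)^2 = 0$ one has $y^2 = -x^2 - xy - yx$, and combined with $xyx = 0$ and $yxy = 0$ this yields successively the identities $y^3 = x^3$, $yx^3 = x^3 y$, $yx^2 y = -x^3 y$, $x^4 y = 0 = yx^4$, and $x^6 = 0$. These reductions produce the twelve-element $K$-basis
$$
\{\,1,\ x,\ y,\ x^2,\ xy,\ yx,\ x^3,\ x^2y,\ yx^2,\ x^4,\ x^3y,\ x^5\,\}
$$
of $R(\FF_4)$, so $\rad^2 R(\FF_4)$ has the nine-element basis obtained by deleting $1, x, y$. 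I would then write
\begin{align*}
f &= \theta_0 x^2 + \theta_1 xy + \theta_2 yx + \theta_3 x^3 + \theta_4 x^2 y + \theta_5 yx^2 \\
 &\quad {}+ \theta_6 x^4 + \theta_7 x^3 y + \theta_8 x^5
\end{align*}
with $\theta_0, \dots, \theta_8 \in K$, provisionally allowing an $x^2$-term, reduce each of $(x+y)f$, $f(x+y)$ and $f^2$ to normal form, and equate the coefficient of each of the six basis monomials of $\rad^3 R(\FF_4)$ (in degrees $3$, $4$, $5$) to zero. The three degree-$3$ equations (from $x^3$, $x^2 y$, $yx^2$) force $\theta_0 = 0$ (eliminating the $x^2$-term) and $\theta_2 = -\theta_1$; the two degree-$4$ equations (from $x^4$ and $x^3 y$) then collapse to the single relation $\theta_5 = 2\theta_3 - \theta_4 + \theta_1^2$; and the degree-$5$ equation (from $x^5$), after substituting the earlier relations, becomes exactly $2\theta_7 = 2(\theta_6 - \theta_1^3 + 2\theta_1(\theta_4 - \theta_3))$. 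The coefficient $\theta_8$ remains unconstrained because $x^5$ is annihilated on both sides by $x$ and by $y$ in $R(\FF_4)$, and because $f^2$ has no surviving term in which it can appear. The converse direction of the lemma is immediate by running the same computation backwards.

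The main obstacle is purely computational bookkeeping: the expansion of $(x+y)f + f(x+y) + f^2$ involves many monomial products that must each be reduced to normal form before coefficients can be collected. The two most delicate reductions are $yx^2 y = -x^3 y$ (obtained by computing $y^2 \cdot xy$ in two ways, using $y^2 = -x^2 - xy - yx$ and $yxy = 0$) and $x^4 y = 0$ (obtained by equating the two expressions $y \cdot y^4$ and $y^4 \cdot y$ for $y^5$); without them the two degree-$4$ equations would fail to coincide, and the cubic-in-$\theta_1$ contribution on the right-hand side of the third stated relation would not materialise.
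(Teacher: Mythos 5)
Your proposal is correct and follows essentially the same route as the paper: establish the twelve-element monomial basis of $R(\FF_4)$ via the reductions coming from $xyx=yxy=(x+y)^2=0$, write a general $f\in\rad^2 R(\FF_4)$ with a provisional $x^2$-term, expand the square, and compare coefficients of the six basis monomials in degrees $3$, $4$, $5$; the resulting system (including the retained factor of $2$ in the last relation, which matters in characteristic $2$) matches the paper's exactly. No substantive differences to report.
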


\begin{proof}
We claim, that
$B = \{1_K,x,y,
  xx,
  xy,
  yx,
  xxx,
  xxy,
  yxx,
  xxxx,
  xxxy,
  xxxxx\}$
form a basis of $R(\FF_4)$ over $K$.
Indeed, it is easy to see, by induction on the degree of elements from $R(\FF_4)$,
that each element $\omega \in R(\FF_4)$
which is a multiplication of elements $x$ and $y$
is a  linear combination (possibly trivial)
of elements from $B$.
In particular we have:
\begin{align*}
  yy &= yy - (x+y)^2 = - ( xx + xy + yx ), \\
  xyy &= - x ( xx + xy + yx ) = - ( xxx + xxy ) - xyx = - ( xxx + xxy ) , \\
  yyx &= - ( xx + xy + yx ) x = - ( xxx + yxx ) , \\
  yyy &= - y ( xx + xy + yx ) = - ( yxx + yyx ) = - (yxx - ( xxx + yxx )) = xxx , \\
 xyyy &= -x(yxx+yyx) = -xyyx = x(xxx+yxx) = xxxx, \\
  yyyx &= - (xyy+xxy)x = - xyyx = xxxx , \\
  yyyy &= yxxx = - y(xyy+xxy) = - yxxy = (xxx+yyx)y = xxxy , \\
  xxyy &= -x(xxx + xxy) = -(xxxx + xxxy) , \\
  yyxx &= -(xxx + yxx)x = -(xxxx + yxxx) = -(xxxx + xxxy) . 
\end{align*}
Similarly, using above equations we obtain equalities:
\begin{align*}
  yyyyy = yyxxx = yxxxy = xxxyy = xyyxx = xxyyx &= - xxxxx, \\
  yyxxy = yxxyy = yyyxx = xyyyx = xxyyy &= xxxxx .
\end{align*}
Moreover, observe that
\begin{align*}
  x(xxxxx) = (xxxxx)x &= (yyxxy)x = yyx(xyx) = 0,
\\
  (xxxxx)y &= (xyyyx)y = xyy(yxy) = 0, 
\\
  y(xxxxx) &= y(xyyyx) = (yxy)yyx = 0, 
\end{align*}
and hence ${\rm rad}^6R(\FF_4) = 0$.

Let $f \in {\rm rad}^2R(\FF_4)$.
Then
\begin{align*}
 f(x,y)
  &=
  \theta_0
  xx
  +
  \theta_1
  xy
  +
  \theta_2
  yx
  +
  \theta_3
  xxx
  +
  \theta_4
  xxy
  +
  \theta_5
  yxx
  +
  \theta_6
  xxxx
 \\&
 \quad
  +
  \theta_7
  xxxy
  +
  \theta_8
  xxxxx
\end{align*}
for some $\theta_0, \dots, \theta_8 \in K$.
Then we have
\begin{align*}
\big(x+y + f(x,y)\big)^2
  &=
 \big(x+y
  +
  \theta_0
  xx
  +
  \theta_1
  xy
  +
  \theta_2
  yx
  +
  \theta_3
  xxx
  +
  \theta_4
  xxy
\\&\ \ \quad
  +
  \theta_5
  yxx
  +
  \theta_6
  xxxx
  +
  \theta_7
  xxxy
 \big)^2
 \\
  &=
(2\theta_0-\theta_1-\theta_2) xxx
+ \theta_0 xxy
+ \theta_0 yxx
\\&\quad
+ (2\theta_3-\theta_4-\theta_5+\theta_0^2-\theta_1 \theta_2) xxxx
\\&\quad
+ (2\theta_3-\theta_4-\theta_5+\theta_0 \theta_1+\theta_0 \theta_2-\theta_1 \theta_2)xxxy
\\&\quad
+ (2\theta_6-2\theta_7+2\theta_0 \theta_3-2\theta_1\theta_5-2\theta_2\theta_4)xxxxx
.
\end{align*}
Hence $f$ is admissible if and only if
there are satisfied equalities
\begin{align*}
 \theta_0 &= 0, \\
 2\theta_0-\theta_1-\theta_2 &= 0,\\
2\theta_3-\theta_4-\theta_5+\theta_0^2-\theta_1 \theta_2 &= 0,\\
2\theta_3-\theta_4-\theta_5+\theta_0 \theta_1+\theta_0 \theta_2-\theta_1 \theta_2 &= 0,\\
2\theta_6-2\theta_7+2\theta_0 \theta_3-2\theta_1\theta_5-2\theta_2\theta_4&= 0. 
\end{align*}
Clearly, these equalities are equivalent to the equalities
\begin{align*}
 \theta_0 &= 0, \\
 \theta_2 &= -\theta_1,\\
 \theta_5  &= 2\theta_3-\theta_4+\theta_1^2 ,\\
2\theta_7 &= 2\big(\theta_6-\theta_1^3+2\theta_1(\theta_4-\theta_3)\big) .
\end{align*}
This ends the proof.
\end{proof}

The remaining part of this article 
is devoted to the proof of 
Theorem~\ref{thm}.

\medskip

Let $f$ be an admissible element of ${\rm rad}^2 R(\FF_4)$.
We will show that the algebras
$\Lambda(\FF_4)$ and $\Lambda^f(\FF_4)$
are isomorphic.
This will be done via a change of generators in $\Lambda(\FF_4)$.
It follows 
from 
Lemma~\ref{lem}
that
there exist $\theta_1, \dots, \theta_8 \in K$,
satisfying
$\theta_2 = - \theta_1$,
$\theta_5 = 2\theta_3-\theta_4+\theta_1^2$,
and
$2\theta_7 = 2(\theta_6-\theta_1^3+2\theta_1(\theta_4-\theta_3))$
such that
\begin{align*}
 f(x,y)
  &=
  \theta_1
  xy
  +
  \theta_2
  yx
  +
  \theta_3
  xxx
  +
  \theta_4
  xxy
  +
  \theta_5
  yxx
  +
  \theta_6
  xxxx
  +
  \theta_7
  xxxy
 \\&
 \quad
  +
  \theta_8
  xxxxx .
\end{align*}

Now we change generators in $\Lambda(\FF_4)$.
We replace 
$a_1$ by $a_1' \in \Lambda(\FF_4)$ 
and 
$\bar{a}_i$ by $\bar{a}_i' \in \Lambda(\FF_4)$,
for $i \in \{1,3\}$, 
defined as follows
\begin{align*}
  a'_1 &= a_1
-\theta_1 a_1 \bar{a}_2 a_3
+(\theta_4-\theta_3-\theta_1^2)
a_1 \bar{a}_2 a_3 \bar{a}_3 a_2 ,
\\
  \bar{a}'_1 &= \bar{a}_1
+ \theta_1 \bar{a}_2 a_3 \bar{a}_1
+ (\theta_1^2 + \theta_3)
\bar{a}_2 a_3 \bar{a}_2 a_3 \bar{a}_1
\\&\quad
+ (2\theta_3-\theta_4+\theta_1^2)
\bar{a}_3 a_2 \bar{a}_2 a_3 \bar{a}_1
+ (\theta_6+\theta_1^3+\theta_1\theta_3)
\bar{a}_2 a_3 \bar{a}_2 a_3 \bar{a}_2 a_3 \bar{a}_1
\\&\quad
+ (\theta_1\theta_6+2\theta_1^2\theta_3+2\theta_3^2+3\theta_3\theta_4+2\theta_1^2\theta_4-\theta_4^2)
\bar{a}_2 a_3 \bar{a}_2 a_3 \bar{a}_2 a_3 \bar{a}_2 a_3 \bar{a}_1
,\\
  \bar{a}'_3 &= \bar{a}_3
+
(\theta_6-\theta_1^3+2\theta_1\theta_4-2\theta_1 \theta_3-\theta_7)
\bar{a}_2 a_3 \bar{a}_2 a_3 \bar{a}_2 a_3 \bar{a}_3
\\&\quad
+
\theta_8
\bar{a}_2 a_3 \bar{a}_2 a_3 \bar{a}_2 a_3 \bar{a}_3 a_2 \bar{a}_3 
\end{align*}
and keep all other arrows 
(i.e. $a_2, a_3, a_4, a_5, \bar{a}_2, \bar{a}_4, \bar{a}_5 $) 
as they are.
Then
\begin{align*}
  a_1 &= a'_1
+\theta_1 a'_1 \bar{a}_2 a_3
+\theta_1^2 a'_1 \bar{a}_2 a_3 \bar{a}_2 a_3
-(\theta_4-\theta_3-\theta_1^2)
a'_1 \bar{a}_2 a_3 \bar{a}'_3 a_2 ,
\\&\quad
+\theta_1^3 a'_1 \bar{a}_2 a_3 \bar{a}_2 a_3 \bar{a}_2 a_3
+\theta_1^4 a'_1 \bar{a}_2 a_3 \bar{a}_2 a_3 \bar{a}_2 a_3 \bar{a}_2 a_3
\\
  \bar{a}_1 &= \bar{a}'_1
- \theta_1 \bar{a}_2 a_3 \bar{a}'_1
- \theta_3
\bar{a}_2 a_3 \bar{a}_2 a_3 \bar{a}'_1
- (2\theta_3-\theta_4+\theta_1^2)
\bar{a}'_3 a_2 \bar{a}_2 a_3 \bar{a}'_1
,
\\&\quad
+ (\theta_1\theta_3-\theta_6)
\bar{a}_2 a_3 \bar{a}_2 a_3 \bar{a}_2 a_3 \bar{a}'_1
\\&\quad
+ (\theta_1\theta_6+\theta_1^2\theta_3-\theta_3^2-3\theta_3\theta_4-2\theta_1^2\theta_4+\theta_4^2+\theta_2^4)
\bar{a}_2 a_3 \bar{a}_2 a_3 \bar{a}_2 a_3 \bar{a}_2 a_3 \bar{a}'_1
,
\\
  \bar{a}_3 &= \bar{a}'_3
-
(\theta_6-\theta_1^3+2\theta_1\theta_4-2\theta_1 \theta_3-\theta_7)
\bar{a}_2 a_3 \bar{a}_2 a_3 \bar{a}_2 a_3 \bar{a}'_3
\\&\quad
-
\theta_8
\bar{a}_2 a_3 \bar{a}_2 a_3 \bar{a}_2 a_3 \bar{a}'_3 a_2 \bar{a}'_3 .
\end{align*}
Therefore this is an invertible change 
of generators.

We will show now that, with these new generators, 
$\Lambda(\FF_4)$ satisfies the relations of $\Lambda^f(\FF_4)$.
Hence we need to show the equalities
\begin{gather*}
  a'_1 \bar{a}'_1 = 0, 
  \quad 
  {a}_3 \bar{a}'_3 + \bar{a}_5 {a}_4 = 0,  
  \quad 
 \bar{a}'_1 a'_1 + \bar{a}'_3 a_2 + \bar{a}_2 a_3 
  + f(\bar{a}_2 a_3, \bar{a}'_3 a_2) = 0.
\end{gather*}

We note  that
from the relations
$a_1 \bar{a}_1 = 0$
and
$\bar{a}_1 a_1 + \bar{a}_2 a_3 + \bar{a}_3 a_2 = 0$
we obtain the equalities
\begin{align*}
a_1 \bar{a}_2 a_3 \bar{a}_3 a_2 \bar{a}_1
 &=
  - a_1 \bar{a}_2 a_3 \bar{a}_2 a_3 \bar{a}_1
  - a_1 \bar{a}_2 a_3 \bar{a}_1 a_1 \bar{a}_1
 =
  - a_1 \bar{a}_2 a_3 \bar{a}_2 a_3 \bar{a}_1
,\\
a_1 \bar{a}_3 a_2 \bar{a}_2 a_3 \bar{a}_1
 &=
  - a_1 \bar{a}_2 a_3 \bar{a}_2 a_3 \bar{a}_1
  - a_1 \bar{a}_1 a_1 \bar{a}_2 a_3 \bar{a}_1
 =
  - a_1 \bar{a}_2 a_3 \bar{a}_2 a_3 \bar{a}_1
,\\
a_1 \bar{a}_3 a_2 \bar{a}_3 a_2 \bar{a}_1
 &=
  - a_1 \bar{a}_2 a_3 \bar{a}_3 a_2 \bar{a}_1
  - a_1 \bar{a}_1 a_1 \bar{a}_3 a_2 \bar{a}_1
 =
  a_1 \bar{a}_2 a_3 \bar{a}_2 a_3 \bar{a}_1
.
\end{align*}
Note also that from the calculation from the proof of
Lemma~\ref{lem}
(with $x=\bar{a}_2 a_3$ and $y =\bar{a}_3 a_2$)
we have the equality
$\bar{a}_2 a_3 \bar{a}_3 a_2 \bar{a}_3 a_2 \bar{a}_2 a_3
= - \bar{a}_2 a_3 \bar{a}_2 a_3 \bar{a}_2 a_3 \bar{a}_2 a_3$,
and hence the equality
$$a_1 \bar{a}_2 a_3 \bar{a}_3 a_2 \bar{a}_3 a_2 \bar{a}_2 a_3  \bar{a}_1
= - a_1 \bar{a}_2 a_3 \bar{a}_2 a_3 \bar{a}_2 a_3 \bar{a}_2 a_3 \bar{a}_1 .$$
Moreover, we have the equalities
\begin{align*}
  (\theta_1\theta_6+2\theta_1^2\theta_3&+2\theta_3^2+3\theta_3\theta_4+2\theta_1^2\theta_4-\theta_4^2)
  - \theta_1 (\theta_6+\theta_1^3+\theta_1\theta_3)
\\&\qquad
  - (\theta_4-\theta_3-\theta_1^2) (2\theta_3-\theta_4+\theta_1^2)
\\&=
  (\theta_1^2\theta_3+2\theta_3^2+3\theta_3\theta_4+2\theta_1^2\theta_4-\theta_4^2-\theta_2^4)
\\&\qquad
  - (3\theta_3\theta_4-\theta_4^2+2\theta_1^2\theta_4-2\theta_3^2-3\theta_1^2\theta_3-\theta_1^4)
  = 0
.
\end{align*}
Then we obtain
\begin{align*}
a'_1 \bar{a}'_1
 &= a_1 \bar{a}_1
 + (\theta_1-\theta_1) a_1 \bar{a}_2 a_3 \bar{a}_1
 + \big(\theta_1^2
   + (\theta_6+\theta_1^3+\theta_1\theta_3)
\\&\qquad
   - (\theta_4-\theta_3-\theta_1^2)
   - (2\theta_3-\theta_4+\theta_1^2)
     \big)
   a_1 \bar{a}_2 a_3 \bar{a}_2 a_3 \bar{a}_1
\\&\quad
 + \big(
  (\theta_1\theta_6+2\theta_1^2\theta_3+2\theta_3^2+3\theta_3\theta_4+2\theta_1^2\theta_4-\theta_4^2)
  - \theta_1 (\theta_6+\theta_1^3+\theta_1\theta_3)
\\&\qquad
  - (\theta_4-\theta_3-\theta_1^2) (2\theta_3-\theta_4+\theta_1^2)
   \big)
  a_1 \bar{a}_2 a_3 \bar{a}_2 a_3 \bar{a}_2 a_3 \bar{a}_2 a_3 \bar{a}_1
 = 0 .
\end{align*}
We have also
$$
 a_3 \bar{a}_2 a_3 \bar{a}_2 a_3 \bar{a}_2 a_3 \bar{a}_3
 =
 - a_3 (\bar{a}_3 a_2 \bar{a}_3 a_2 \bar{a}_2 a_3 + \bar{a}_3 a_2 \bar{a}_2 a_3 \bar{a}_2 a_3) \bar{a}_3
 = 0 ,
$$
and similarly
$$
 a_3 \bar{a}_2 a_3 \bar{a}_2 a_3 \bar{a}_2 a_3 \bar{a}_2 a_3 \bar{a}_3
 = 0 ,
$$
and hence we obtain
\begin{align*}
a_3 \bar{a}'_3
+ \bar{a}_5 a_4
&=
 a_3 \bar{a}_3 + \bar{a}_5 a_4
 = 0.
\end{align*}
Further, observe that
\begin{align*}
\bar{a}'_3 a_2
  &= \bar{a}_3 a_2
+
(\theta_6-\theta_1^3+2\theta_1\theta_4-2\theta_1 \theta_3-\theta_7)
\bar{a}_2 a_3 \bar{a}_2 a_3 \bar{a}_2 a_3 \bar{a}_3 a_2
\\&\quad
+
\theta_8
\bar{a}_2 a_3 \bar{a}_2 a_3 \bar{a}_2 a_3 \bar{a}_3 a_2 \bar{a}_3 a_2 
\\
 &= \bar{a}_3 a_2
+
(\theta_6-\theta_1^3+2\theta_1\theta_4-2\theta_1 \theta_3-\theta_7)
\bar{a}_2 a_3 \bar{a}_2 a_3 \bar{a}_2 a_3 \bar{a}_3 a_2
\\&\quad
-
\theta_8
\bar{a}_2 a_3 \bar{a}_2 a_3 \bar{a}_2 a_3 \bar{a}_2 a_3 \bar{a}_2 a_3
\end{align*}
and
\begin{align*}
f (\bar{a}_2 a_3, \bar{a}'_3 a_2) 
&
  = f(\bar{a}_2 a_3, \bar{a}_3 a_2)
\\&\qquad
  +
\theta_1(\theta_6-\theta_1^3+2\theta_1\theta_4-2\theta_1 \theta_3-\theta_7)
\bar{a}_2 a_3 \bar{a}_2 a_3 \bar{a}_2 a_3 \bar{a}_2 a_3 \bar{a}_3 a_2
\\&\qquad
  +
\theta_2(\theta_6-\theta_1^3+2\theta_1\theta_4-2\theta_1 \theta_3-\theta_7)
\bar{a}_2 a_3 \bar{a}_2 a_3 \bar{a}_2 a_3 \bar{a}_3 a_2 \bar{a}_2 a_3
\\&
  = f(\bar{a}_2 a_3, \bar{a}_3 a_2)
.
\end{align*}
In the calculations of
$\bar{a}'_1 a'_1$
we will use
the formulas derived in the proof of
Lemma~\ref{lem}
(with $x=\bar{a}_2 a_3$ and $y =\bar{a}_3 a_2$)
and
the substitution
$\bar{a}_1 a_1 = - (\bar{a}_2 a_3 + \bar{a}_3 a_2)$.
We have
\begin{align*}
\bar{a}'_1 a'_1
  &= \bar{a}_1 a_1
-\theta_1 \bar{a}_1 a_1 \bar{a}_2 a_3
+(\theta_4-\theta_3-\theta_1^2)
\bar{a}_1 a_1 \bar{a}_2 a_3 \bar{a}_3 a_2
\\&\quad
+  \theta_1 \bar{a}_2 a_3 \bar{a}_1 a_1
-\theta_1^2 \bar{a}_2 a_3 \bar{a}_1 a_1 \bar{a}_2 a_3
\\&\quad
+\theta_1 (\theta_4-\theta_3-\theta_1^2)
 \bar{a}_2 a_3 \bar{a}_1 a_1 \bar{a}_2 a_3 \bar{a}_3 a_2
\\&\quad
+ (\theta_1^2 + \theta_3)
\bar{a}_2 a_3 \bar{a}_2 a_3 \bar{a}_1 a_1
-\theta_1 (\theta_1^2 + \theta_3)
\bar{a}_2 a_3 \bar{a}_2 a_3 \bar{a}_1 a_1 \bar{a}_2 a_3
\\&\quad
+(\theta_4-\theta_3-\theta_1^2)
(\theta_1^2 + \theta_3)
\bar{a}_2 a_3 \bar{a}_2 a_3 \bar{a}_1 a_1 \bar{a}_2 a_3 \bar{a}_3 a_2
\\&\quad
+ (2\theta_3-\theta_4+\theta_1^2)
\bar{a}_3 a_2 \bar{a}_2 a_3 \bar{a}_1 a_1
\\&\quad
-\theta_1 (2\theta_3-\theta_4+\theta_1^2)
\bar{a}_3 a_2 \bar{a}_2 a_3 \bar{a}_1 a_1 \bar{a}_2 a_3
\\&\quad
+(\theta_4-\theta_3-\theta_1^2)
(2\theta_3-\theta_4+\theta_1^2)
\bar{a}_3 a_2 \bar{a}_2 a_3 \bar{a}_1 a_1 \bar{a}_2 a_3 \bar{a}_3 a_2
\\&\quad
+ (\theta_6+\theta_1^3+\theta_1\theta_3)
\bar{a}_2 a_3 \bar{a}_2 a_3 \bar{a}_2 a_3 \bar{a}_1 a_1
\\&\quad
- \theta_1 (\theta_6+\theta_1^3+\theta_1\theta_3)
\bar{a}_2 a_3 \bar{a}_2 a_3 \bar{a}_2 a_3 \bar{a}_1 a_1 \bar{a}_2 a_3
\\&\quad
+(\theta_1\theta_6+2\theta_1^2\theta_3+2\theta_3^2+3\theta_3\theta_4+2\theta_1^2\theta_4-\theta_4^2)
\bar{a}_2 a_3 \bar{a}_2 a_3 \bar{a}_2 a_3 \bar{a}_2 a_3 \bar{a}_1 a_1
%
\end{align*}
\begin{align*}
  &= \bar{a}_1 a_1
+\theta_1 ( \bar{a}_2 a_3 + \bar{a}_3 a_2 ) \bar{a}_2 a_3
-(\theta_4-\theta_3-\theta_1^2)
\bar{a}_2 a_3 \bar{a}_2 a_3 \bar{a}_3 a_2
\\&\quad
-  \theta_1 \bar{a}_2 a_3 ( \bar{a}_2 a_3 + \bar{a}_3 a_2 )
+\theta_1^2 \bar{a}_2 a_3 \bar{a}_2 a_3 \bar{a}_2 a_3
\\&\quad
-\theta_1 (\theta_4-\theta_3-\theta_1^2)
 \bar{a}_2 a_3 \bar{a}_2 a_3 \bar{a}_2 a_3 \bar{a}_3 a_2
\\&\quad
    - (\theta_1^2 + \theta_3)
\bar{a}_2 a_3 \bar{a}_2 a_3 ( \bar{a}_2 a_3 + \bar{a}_3 a_2 )
+\theta_1 (\theta_1^2 + \theta_3)
\bar{a}_2 a_3 \bar{a}_2 a_3 \bar{a}_2 a_3 \bar{a}_2 a_3
\\&\quad
- (2\theta_3-\theta_4+\theta_1^2)
\bar{a}_3 a_2 \bar{a}_2 a_3 \bar{a}_2 a_3
\\&\quad
+\theta_1 (2\theta_3-\theta_4+\theta_1^2)
\bar{a}_3 a_2 \bar{a}_2 a_3 \bar{a}_2 a_3 \bar{a}_2 a_3
\\&\quad
-(\theta_4-\theta_3-\theta_1^2)
(2\theta_3-\theta_4+\theta_1^2)
\bar{a}_3 a_2 \bar{a}_2 a_3 \bar{a}_2 a_3 \bar{a}_2 a_3 \bar{a}_3 a_2
\\&\quad
- (\theta_6+\theta_1^3+\theta_1\theta_3)
\bar{a}_2 a_3 \bar{a}_2 a_3 \bar{a}_2 a_3 ( \bar{a}_2 a_3 + \bar{a}_3 a_2 )
\\&\quad
+ \theta_1 (\theta_6+\theta_1^3+\theta_1\theta_3)
\bar{a}_2 a_3 \bar{a}_2 a_3 \bar{a}_2 a_3 \bar{a}_2 a_3 \bar{a}_2 a_3
\\&\quad
-(\theta_1\theta_6+2\theta_1^2\theta_3+2\theta_3^2+3\theta_3\theta_4+2\theta_1^2\theta_4-\theta_4^2)
\bar{a}_2 a_3 \bar{a}_2 a_3 \bar{a}_2 a_3 \bar{a}_2 a_3 \bar{a}_2 a_3
\\
  &= \bar{a}_1 a_1
+ (\theta_1 -\theta_1) \bar{a}_2 a_3 \bar{a}_2 a_3
- \theta_1 \bar{a}_2 a_3 \bar{a}_3 a_2
+ \theta_1 \bar{a}_3 a_2 \bar{a}_2 a_3
\\&\quad
+\big(\theta_1^2 - (\theta_1^2 + \theta_3)\big) \bar{a}_2 a_3 \bar{a}_2 a_3 \bar{a}_2 a_3
\\&\quad
- \big((\theta_4-\theta_3-\theta_1^2) + (\theta_1^2 + \theta_3)\big)
\bar{a}_2 a_3 \bar{a}_2 a_3 \bar{a}_3 a_2
\\&\quad
- (2\theta_3-\theta_4+\theta_1^2)
\bar{a}_3 a_2 \bar{a}_2 a_3 \bar{a}_2 a_3
\\&\quad
+\big(\theta_1 (\theta_1^2 + \theta_3)
- (\theta_6+\theta_1^3+\theta_1\theta_3)\big)
\bar{a}_2 a_3 \bar{a}_2 a_3 \bar{a}_2 a_3 \bar{a}_2 a_3
\\&\quad
+\big(\theta_1 (2\theta_3-\theta_4+\theta_1^2)
- \theta_1 (\theta_4-\theta_3-\theta_1^2)
\\&\ \ \qquad
- (\theta_6+\theta_1^3+\theta_1\theta_3)
\big)
 \bar{a}_2 a_3 \bar{a}_2 a_3 \bar{a}_2 a_3 \bar{a}_3 a_2
\\&\quad
+\big((\theta_4-\theta_3-\theta_1^2)
(2\theta_3-\theta_4+\theta_1^2)
+ \theta_1 (\theta_6+\theta_1^3+\theta_1\theta_3)
-(\theta_1\theta_6
\\&\ \ \qquad
+2\theta_1^2\theta_3+2\theta_3^2+3\theta_3\theta_4+2\theta_1^2\theta_4-\theta_4^2)
\big)
\bar{a}_2 a_3 \bar{a}_2 a_3 \bar{a}_2 a_3 \bar{a}_2 a_3 \bar{a}_2 a_3
\\
  &= \bar{a}_1 a_1
- \theta_1 \bar{a}_2 a_3 \bar{a}_3 a_2
+ \theta_1 \bar{a}_3 a_2 \bar{a}_2 a_3
- \theta_3 \bar{a}_2 a_3 \bar{a}_2 a_3 \bar{a}_2 a_3
\\&\quad
- \theta_4
\bar{a}_2 a_3 \bar{a}_2 a_3 \bar{a}_3 a_2
- \theta_5
\bar{a}_3 a_2 \bar{a}_2 a_3 \bar{a}_2 a_3
- \theta_6
\bar{a}_2 a_3 \bar{a}_2 a_3 \bar{a}_2 a_3 \bar{a}_2 a_3
\\&\quad
+(2\theta_1 \theta_3 - 2 \theta_1 \theta_4+\theta_1^3 - \theta_6)
 \bar{a}_2 a_3 \bar{a}_2 a_3 \bar{a}_2 a_3 \bar{a}_3 a_2
.
\end{align*}
Summing up the above equations, we obtain
\begin{align*}
 \bar{a}'_1 a'_1 + \bar{a}'_3 a_2 + \bar{a}_2 a_3 
  + f(\bar{a}_2 a_3, \bar{a}'_3 a_2) 
  = 
 \bar{a}_1 a_1 + \bar{a}_3 a_2 + \bar{a}_2 a_3 
= 0.
\end{align*}
Observe also that
\begin{align*}
  \bar{a}_2 a_3 \bar{a}'_3 a_2 \bar{a}_2 a_3 
  &= \bar{a}_2 a_3 \bar{a}_3 a_2 \bar{a}_2 a_3 
  = 0, 
\\ 
  \bar{a}'_3 a_2 \bar{a}_2 a_3 \bar{a}'_3 a_2 
  &= \bar{a}_3 a_2 \bar{a}_2 a_3 \bar{a}_3 a_2 
   = 0, 
\end{align*}
because $\rad^{11} \Lambda(\mathbb{F}_4) = 0$. 
Finally, applying the equalities
$(\bar{a}_2 a_3)^4 \bar{a}'_3 a_2 = 0$,
$(\bar{a}_2 a_3)^3 \bar{a}'_3 a_2 \bar{a}_2 a_3 = 0$,
and
$2(\theta_6-\theta_1^3+2\theta_1(\theta_4-\theta_3) - \theta_7) = 0$,
we obtain
\begin{align*}
  (\bar{a}_2 a_3 + \bar{a}'_3 a_2)^2 
  &= (\bar{a}_2 a_3 + \bar{a}_3 a_2)^2 
\\&\quad
   + (\theta_6-\theta_1^3+2\theta_1(\theta_4-\theta_3) - \theta_7) 
       (\bar{a}_2 a_3)^3 (\bar{a}_3 a_2)^2 
\\&\quad
   + (\theta_6-\theta_1^3+2\theta_1(\theta_4-\theta_3) - \theta_7) 
            + \bar{a}_3 a_2 (\bar{a}_2 a_3)^3 \bar{a}_3 a_2
\\&
  =
   (- \bar{a}_1 a_1)^2 
   -2 (\theta_6-\theta_1^3+2\theta_1(\theta_4-\theta_3) - \theta_7) 
       (\bar{a}_2 a_3)^5
\\&
   = 0.
\end{align*}
Hence with these new generators, 
$\Lambda(\FF_4)$ satisfies the relations of $\Lambda^f(\FF_4)$,
and consequently 
the algebras $\Lambda(\FF_4)$ and $\Lambda^f(\FF_4)$
are isomorphic.

%

\end{document}